\newtheorem{theorem}{Theorem}
\newtheorem{corollary}{Corollary}
\newtheorem{propos}{Proposition}
\newtheorem{remark}{Remark}
\theoremstyle{definition}
\DeclareMathOperator{\supp}{supp}
\begin{document}
\title{A version of Calder\'{o}n-Mityagin theorem for the class of rearrangement invariant groups.}
\thanks{{\rm *}The author has been supported by the Ministry of Education and Science 
of the Russian Federation (project 1.470.2016/1.4) and by the RFBR grant 18--01--00414.}
\author[Astashkin]{Sergey V. Astashkin}
\address[Sergey V. Astashkin]{Department of Mathematics, Samara National Research University, Moskovskoye shosse 34, 443086, Samara, Russia
}
\email{\texttt{astash56@mail.ru}}
\maketitle

\vspace{-7mm}

\begin{abstract}
Let $l_0$ be the group (with respect to the coordinate-wise addition) of all sequences of real numbers $x=(x_k)_{k=1}^\infty$ that are eventually zero, equipped with the quasi-norm $\|x\|_0={\rm{card}}\{\supp\,x\}$. A description of orbits of elements in the pair $(l_0,l_1)$ is given, which   complements (in the sequence space setting) the classical Calder\'{o}n-Mityagin theorem on a description of orbits of elements in the pair $(l_1,l_\infty)$. As a consequence, we obtain that the pair $(l_0,l_1)$ is ${\mathcal K}$-monotone. 
\end{abstract}

\footnotetext[1]{2010 {\it Mathematics Subject Classification}: 46B70, 46B42.}
\footnotetext[2]{\textit{Key words and phrases}: quasi-normed group, rearrangement invariant group, orbit of an element, Calder\'{o}n-Mityagin theorem, interpolation group, Peetre's ${\mathcal K}$-functional, ${\mathcal K}$-monotone pair}

\section{\protect \medskip Introduction, preliminaries and main results}

\newcommand{\Orb}{{\rm{Orb}}\,}

According to the classical Calder\'{o}n-Mityagin theorem (see \cite{Cal}, \cite{Mit}), if $b=(b_k)_{k=1}^\infty\in l_\infty$, then a sequence $a=(a_k)_{k=1}^\infty$ is representable in the form $a=Tb$ for some linear operator $T$ bounded both in $l_\infty$ and $l_1$ if and only if
$$
\sum_{i=1}^k a_i^*\le C\sum_{i=1}^k b_i^*,\;\;k=1,2,\dots$$
for some $C>0$, where $(u_k^*)_{k=1}^\infty$ is the nonincreasing rearrangement of the sequence $(|u_k|)_{k=1}^\infty$. Here, we give a constructive proof of a counterpart of this result for the class of rearrangement invariant groups, intermediate between $l_1$ and the group $l_0$ of eventually zero sequences with a natural quasi-norm. 

Let $X$ be an Abelian group of sequences of real numbers $x=(x_k)_{k=1}^\infty$ with respect to the coordinate-wise
addition. Recall that a quasi-norm on $X$ is a real function $x\mapsto \|x\|_X$ that  satisfies the conditions: (a)
$\|x\|_X\ge 0$ and $\| x\|_X=0\Longleftrightarrow x=0$;
(b) $\|-x\|_X=\| x\|_X$; (c) $\| x+y\|_X\le C(\| x\|_X+\| y\|_X)$ for some $C\ge 1$. In this case X is called a {\it quasi-normed group}. As is known (see e.g. \cite[Lemma~3.10.1]{BL}), without loss of generality, we may assume that $C=1$. It will be supposed also that the quasi-norm $x\mapsto \|x\|_X$ has the ideal and rearrangement invariant properties that can be expressed as follows: if sequences $x=(x_k)_{k=1}^\infty$ and $y=(y_k)_{k=1}^\infty$ are such that $x_k^*\le y_k^*$, $k=1,2,\dots$ and $y\in X$, then $x\in X$ and
$\|x\|_X\le\|y\|_X$. Further, we shall refer such a quasi-normed group as to a {\it rearrangement invariant (r.i.) group}. An important example  is the classical space $l_p$, $0<p\le\infty$, where for $0 < p < 1$ the quasi-norm on $l_p$ is defined by 
$$
\|x\|_p:=\sum_{k=1}^\infty |x_k|^p.
$$
Passing to the limit as $p\to 0$ in this formula, we get the set $l_0$ of all sequences $x=(x_k)_{k=1}^\infty$ that are eventually zero, i.e., such that $\|x\|_0:={\rm{card}}\{\supp\,x\}<\infty$, where $\supp\,x:=\{k\in\mathbb{N}:\, x_k\ne 0\}$.
Then $l_0$ becomes a r.i. group equipped with the sub-additive quasi-norm $\|x\|_0$, which generates on $l_0$ the discrete topology. Since $l_0$ is continuously embedded into $l_1$, the pair $(l_0,l_1)$ is compatible. Recall that a  pair of quasi-normed groups $(X_0,X_1)$ is {\it compatible} if $X_0$ and $X_1$ are embedded continuously into some Hausdorff topological group.

The main goal of this paper is to get a description of the orbit $\Orb(b;l_0,l_1)$ in the pair $(l_0,l_1)$ of an arbitrary element $b\in l_1$ and by using this result to obtain some interpolation properties of this pair. Observe that similar problems for the pair $(l_0,l_\infty)$ (more precisely, for the corresponding pair of r.i. function groups) were considered in \cite{A11}.

Recall first some necessary definitions (for more detailed information we refer to the paper \cite{PS} and the monographs \cite{BL,BK,BS,KPS}). 

A mapping $T:\,X\to X$, where $X$ is a quasi-normed group, is called a {\it homomorphism} on $X$ if $T(x+y)=Tx+Ty$ and $T(-x)=-T(x)$ for $x,y\in X$.
As usual, a homomorphism $T$ is called {\it bounded} if
$$ 
\|T\|_{X\to X}:= \sup\limits_{x \ne 0}
{\|Tx \|\over \|x \|}< \infty.
$$
Let $(X_0,X_1)$ be a compatible pair of quasi-normed groups. We define the {\it orbit} of an element $b\in X_0+X_1$ with respect to $(X_0,X_1)$ as the set $\Orb(b;X_0,X_1)$ of all $x\in X_0+X_1$, representable in the form $x=Tb$, where $T$ is a bounded homomorphism in $X_0$ and $X_1$. Furthermore, we let
$$
\|x\|_{\Orb}:=\|x\|_{\Orb(b;X_0,X_1)}=\inf\| T\|_{(X_0,X_1)},
$$
where $\|T\|_{(X_0,X_1)}:= \max\limits_{i=0,1}\|T\|_{X_i\to X_i}$ and the infimum is taken over all homomorphisms $T$ such that $Ta=x$. It is easy to check that the quasi-norm $x\mapsto \|x\|_{\Orb}$
makes $\Orb(b;X_0,X_1)$ into a r.i. group.

We can now state our main result.

\begin{theorem}\label{Th1}
Let $b=(b_i)_{i=1}^\infty\in l_1$. Then, a sequence $a=(a_i)_{i=1}^\infty\in l_1$ belongs to the orbit $\Orb(b;l_0,l_1)$ if and only if there is a constant $C>0$ such that for all $k=1,2,\dots$ we have 
\begin{equation}\label{EQ12}
\sum_{i=k}^\infty a_i^*\le C\sum_{i=[k/C]}^\infty b_i^*,\;\;k=1,2,\dots
\end{equation}

Moreover, \eqref{EQ12} holds with $C=\|a\|_{\Orb}$, whenever $a\in\Orb(b;l_0,l_1)$. Conversely, if we have \eqref{EQ12}, then $\|a\|_{\Orb}\le 9(1+[C])$, for $C>1$, and $\|a\|_{\Orb}\le 3[C^{-1}]^{-1}$, for $C\le 1$.
\end{theorem}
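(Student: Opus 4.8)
The plan is to treat the two implications separately and, in both, to work through the nonincreasing rearrangements. For the necessity part (which also yields the sharp constant $C=\|a\|_{\Orb}$), I would fix any bounded homomorphism $T$ with $Tb=a$ and $\|T\|_{(l_0,l_1)}\le M$ and exploit the two norms through a single near-optimal splitting of $b$. Concretely, for each integer $m\ge 0$ write $b=b'+b''$, where $b'$ collects the $m$ largest (in modulus) coordinates of $b$ and $b''$ is the remainder, so that $\|b'\|_0\le m$ and $\|b''\|_1=\sum_{i>m}b_i^*$. Applying $T$ gives $a=Tb'+Tb''$ with $\card\,\supp(Tb')\le Mm$ and $\|Tb''\|_1\le M\sum_{i>m}b_i^*$. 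The subadditivity of the nonincreasing rearrangement, $(f+g)^*_{p+q-1}\le f^*_p+g^*_q$, applied with $p=[Mm]+1$ (so that $(Tb')^*_p=0$), yields $a^*_{[Mm]+j}\le (Tb'')^*_j$ for every $j\ge1$; summing over $j$ gives $\sum_{i>[Mm]}a_i^*\le\|Tb''\|_1\le M\sum_{i>m}b_i^*$. Choosing $m\approx k/M$ so that $[Mm]+1$ runs over the relevant values of $k$ produces exactly \eqref{EQ12} with $C=M$, and taking the infimum over admissible $T$ (with a short limiting remark to reach the infimal constant) gives the inequality with $C=\|a\|_{\Orb}$.

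For the sufficiency part I would first reduce to the model case in which both $a=a^*$ and $b=b^*$ are nonincreasing and nonnegative. This is legitimate because coordinate permutations and sign changes are homomorphisms that are isometric on both $l_0$ and $l_1$: writing $b=Wb^*$ and $a=Va^*$ with such $W,V$, it suffices to build $T_0$ with $T_0b^*=a^*$ and to set $T=VT_0W^{-1}$, which costs nothing in either norm. I would then seek $T_0$ as a nonnegative matrix $(t_{ij})$ with $\sum_j t_{ij}b_j=a_i$, using the elementary identities $\|T_0\|_{l_1\to l_1}=\sup_j\sum_i t_{ij}$ (largest absolute column sum) and $\|T_0\|_{l_0\to l_0}=\sup_j\card\{i:\,t_{ij}\ne0\}$ (largest column support). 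Thus the whole problem becomes: manufacture a matrix transporting $b$ to $a$ whose columns have both bounded length and bounded mass.

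The construction I would use factors as $T_0=E\,D$. Here $D$ is a dilation that repeats each coordinate $b_j$ exactly $\lambda:=[C]+1$ times, producing the nonincreasing sequence $d_i=b_{[i/\lambda]}$; each column of $D$ has support and mass equal to $\lambda$, so $\|D\|_{(l_0,l_1)}=1+[C]$, and since $\lambda\ge C$ forces $\sum_{i\ge k}d_i\ge\lambda B_{[k/\lambda]}\ge C B_{[k/C]}\ge\sum_{i\ge k}a_i$, the shifted inequality \eqref{EQ12} becomes the constant-free tail majorization $\sum_{i\ge k}a_i\le\sum_{i\ge k}d_i$ for all $k$. The operator $E$ must then realize this majorization, i.e. produce $a=Ed$ from two nonincreasing sequences whose tails are ordered; the point is that tail majorization forces $a$ to be no more spread out than $d$, so $a$ can be assembled from $d$ by a \emph{concentrating} matrix whose columns are short and of bounded mass. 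I would build $E$ by a monotone matching of the coordinates of $d$ to those of $a$, pouring the mass of each $d_j$ into the few $a_i$ of comparable index and discarding any surplus (permissible, since only $\sum_i t_{ij}\le M$ is required, not equality); tracking the spill shows each column meets only a bounded number of rows. Composing, $\|T_0\|_{(l_0,l_1)}\le\|E\|\,\|D\|$, and a careful accounting of the matching constants and the floor functions yields the bound $9(1+[C])$ for $C>1$, while the analogous single-stage (sub-sampling) construction handles the contracting regime $C\le1$ with bound $3[C^{-1}]^{-1}$.

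The genuinely delicate step is the realization of the tail majorization by $E$ with columns that are simultaneously short (for the $l_0$ bound) and light (for the $l_1$ bound). Naive mass transport between the two sorted sequences can split a single large $d_j$ across arbitrarily many coordinates of $a$, which would destroy the $l_0$ estimate; the resolution is precisely to use the monotonicity of $a$ and $d$ together with the freedom to waste surplus mass, so that an index-by-index matching keeps every column support bounded by an absolute constant. Making this matching respect both column constraints at once, and bookkeeping the floor functions in $D$ so that the index shift $[k/C]$ comes out correctly, is where essentially all of the work --- and the final numerical constants --- reside.
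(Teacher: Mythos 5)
Your overall architecture mirrors the paper's: necessity via a near-optimal splitting of $b$ (your direct computation is exactly the paper's estimate ${\mathcal E}(t,Tb;l_0,l_1)\le\|T\|\,{\mathcal E}(t/\|T\|,b;l_0,l_1)$ combined with formula \eqref{EQ13d}, and it is correct), and sufficiency via a dilation composed with an operator realizing a constant-free tail majorization. However, there are two genuine gaps in the sufficiency half. First, your dilation step is wrong as stated: the chain $\sum_{i\ge k}d_i\ge\lambda\sum_{j\ge[k/\lambda]}b_j$ with $d=\sigma_\lambda b$ fails because the block of $d$ containing the index $k$ is truncated. Concretely, for $b=e_1$ and $k=\lambda$ one has $\sum_{i\ge\lambda}(\sigma_\lambda b)_i=b_1=1$, whereas $\lambda\sum_{j\ge 1}b_j=\lambda$; the inequality actually goes the other way at such $k$. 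This is precisely why the paper dilates by $3([C]+1)$ rather than $[C]+1$ (the extra factor absorbs the loss of one full block), and it is also why your bookkeeping cannot produce the stated constant $9(1+[C])$: with $\|D\|=[C]+1$ and $\|E\|_{l_0\to l_0}\le 3$ you would get $3([C]+1)$, which is better than the theorem claims and should have signalled that the intermediate inequality is too optimistic. Either you must enlarge the dilation factor as the paper does, or supply a separate argument showing that \eqref{EQ12} together with the monotonicity of $a^*$ still forces $\sum_{i\ge k}a_i^*\le\sum_{i\ge k}(\sigma_{[C]+1}b)_i^*$; you have proved neither.

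Second, and more seriously, the construction of $E$ --- produce $a=Ed$ from two nonincreasing tail-majorized sequences by a matrix whose columns are simultaneously short and of bounded mass --- is the entire technical content of the theorem (the paper's Proposition~\ref{prop2}, which occupies most of the paper), and you explicitly defer it. The difficulty is not merely that "naive mass transport can split a single $d_j$ across many rows"; it is that when $a_{j}>2d_{j}$ at infinitely many positions $j_1<j_2<\dots$, mass must be imported into each row $j_k$ from deficit columns lying to its right, and one must show that this allocation can be organized so that each \emph{column} feeds at most a bounded number of the rows $j_k$ (the paper achieves at most two, namely $j_k$ and $j_{k+1}$, giving column support $3$ and hence $\|E\|_{l_0\to l_0}\le 3$). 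That this interval-by-interval allocation never runs out of deficit --- so that no column is ever forced to serve $j_k$, $j_{k+1}$ and $j_{k+2}$ --- is exactly the content of the paper's inequality \eqref{EQ13}, proved by a contradiction argument from the tail majorization; nothing in "monotone matching with discarded surplus" substitutes for it. As written, your proposal establishes the necessity direction and the correct reduction to sorted sequences, but the sufficiency direction rests on an incorrect numerical lemma and an unproven combinatorial construction.
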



Given a compatible pair of quasi-normed groups $(X_0, X_1)$, we introduce the approximative ${\mathcal E}$-functional by
$$
{\mathcal E}(t,x;X_0,X_1):=\inf\{\|x-x_0\|_{X_1}:\,x_0\in X_0,x-x_0\in X_1,\|x_0\|_{X_0}\le t\},\quad 
x\in X_0+X_1,\ t>0
$$
\cite[Chapter~7]{BL}. Clearly, $t\mapsto {\mathcal E}(t,x;X_0,X_1)$ is a decreasing function on $[0,\infty)$. Furthermore, for every $b\in X_0+X_1$ we define the {\it $\mathcal E$-orbit} of $b$ in the pair  $(X_0,X_1)$ as the set ${\mathcal E}-\Orb(b;X_0,X_1)$ of all $x\in X_0+X_1$ such that for some $C>0$
$$
{\mathcal E}(t,x;X_0,X_1)\le C {\mathcal E}(t/C,b;X_0,X_1),\;\;t>0.$$
Then, the functional 
$\|x\|_{{\mathcal E}-\Orb}:=\|x\|_{{\mathcal E}-\Orb(b;X_0,X_1)}$, which is equal to the infimum of $C$ satisfying the last inequality, is a quasi-norm on the group ${\mathcal E}-\Orb(b;X_0,X_1)$. To show this, it suffices to check that this functional is sub-additive provided that the quasi-norms in $X_0$ and $X_1$ have the latter property.

Indeed, suppose that $C_1>\|x\|_{{\mathcal E}-\Orb}$ and $C_2>\|y\|_{{\mathcal E}-\Orb}$. Then, by \cite[Lemma~7.1.1]{BL}, for each $0<\gamma<1$ we have 
\begin{eqnarray*}
{\mathcal E}(t,x+y;X_0,X_1)&\le& {\mathcal E}(\gamma t,x;X_0,X_1)+{\mathcal E}((1-\gamma) t,y;X_0,X_1)\\
&\le& C_1{\mathcal E}(\gamma t/C_1,b;X_0,X_1)+C_2{\mathcal E}((1-\gamma) t/C_2,b;X_0,X_1).
\end{eqnarray*}
Hence, choosing $\gamma=C_1/(C_1+C_2)$, we infer
$$
{\mathcal E}(t,x+y;X_0,X_1)\le (C_1+C_2){\mathcal E}(t/(C_1+C_2),b;X_0,X_1),\;\;t>0,
$$
and, since $C_1>\|x\|_{{\mathcal E}-\Orb}$ and $C_2>\|y\|_{{\mathcal E}-\Orb}$ are arbitrary, we conclude that
$$
\|x+y\|_{{\mathcal E}-\Orb}\le  \|x\|_{{\mathcal E}-\Orb}+\|y\|_{{\mathcal E}-\Orb}.$$ 
The rest of the properties of a r.i. quasi-norm for the functional $x\mapsto \|x\|_{{\mathcal E}-\Orb}$ follows immediately from the definition. 

It is clear that for every $x=(x_i)_{i=1}^\infty\in l_1$ we have
\begin{equation}\label{EQ13d}
{\mathcal E}(t,x;l_0,l_1)=\inf\{\|x-x_0\|_{1}:\,{\rm{card}}\{\supp\,x_0\}\le t\}=\sum_{i=[t]+1}^\infty x_i^*.
\end{equation}
As a consequence of Theorem \ref{Th1}, we obtain

\begin{corollary}\label{cor0}
For every $b=(b_i)_{i=1}^\infty\in l_1$ 
$$
\Orb(b;l_0,l_1)=  \mathcal E-\Orb(b;l_0,l_1).$$
Moreover, if $\|x\|_{{\mathcal E}-\Orb}:=\|x\|_{{\mathcal E}-\Orb}(b;l_0,l_1)$, $\|x\|_{\Orb}:=\|x\|_{\Orb}(b;l_0,l_1)$, we have $\|x\|_{{\mathcal E}-\Orb}\le \|x\|_{\Orb}$, and $\|x\|_{\Orb}\le 9([\|x\|_{{\mathcal E}-\Orb}]+1)$ if $\|x\|_{{\mathcal E}-\Orb}>1$, $\|x\|_{\Orb}\le 3[\|x\|_{{\mathcal E}-\Orb}^{-1}]^{-1}$ if $\|x\|_{{\mathcal E}-\Orb}\le 1$.
\end{corollary}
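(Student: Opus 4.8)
The plan is to prove the two inclusions separately, reading off the quasi-norm estimates as I go. The inclusion $\Orb(b;l_0,l_1)\subseteq\mathcal{E}-\Orb(b;l_0,l_1)$, together with $\|x\|_{\mathcal{E}-\Orb}\le\|x\|_{\Orb}$, is a soft fact valid for an arbitrary compatible pair and does not use Theorem \ref{Th1}. The reverse inclusion carries all the quantitative content, and there I would combine the explicit formula \eqref{EQ13d} for the $\mathcal{E}$-functional with the characterization of the orbit supplied by Theorem \ref{Th1}.

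For the easy inclusion I would argue straight from the definitions. Suppose $x=Tb$ for a homomorphism $T$ bounded in both $l_0$ and $l_1$ with $\|T\|_{(l_0,l_1)}\le C$. Given $t>0$, choose $x_0\in l_0$ with $\|x_0\|_0\le t/C$ and with $\|b-x_0\|_1$ arbitrarily close to $\mathcal{E}(t/C,b;l_0,l_1)$. Then $Tx_0\in l_0$, $b-x_0\in l_1$, and from
$$
\|Tx_0\|_0\le\|T\|_{l_0\to l_0}\|x_0\|_0\le C\cdot\frac{t}{C}=t,\qquad \|T(b-x_0)\|_1\le\|T\|_{l_1\to l_1}\|b-x_0\|_1\le C\|b-x_0\|_1
$$
together with the splitting $x=Tx_0+T(b-x_0)$ I obtain $\mathcal{E}(t,x;l_0,l_1)\le C\|b-x_0\|_1$. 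Letting $\|b-x_0\|_1\to\mathcal{E}(t/C,b;l_0,l_1)$ yields the $\mathcal{E}$-orbit inequality with constant $C$, so $\|x\|_{\mathcal{E}-\Orb}\le C$; taking the infimum over admissible $T$ gives $\|x\|_{\mathcal{E}-\Orb}\le\|x\|_{\Orb}$.

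The reverse inclusion is the heart of the matter. Fix $x\in\mathcal{E}-\Orb(b;l_0,l_1)$ and let $C>\|x\|_{\mathcal{E}-\Orb}$, so that $\mathcal{E}(t,x;l_0,l_1)\le C\,\mathcal{E}(t/C,b;l_0,l_1)$ for every $t>0$. By \eqref{EQ13d} this reads
$$
\sum_{i=[t]+1}^\infty x_i^*\le C\sum_{i=[t/C]+1}^\infty b_i^*,\qquad t>0.
$$
Since both sides are step functions of $t$, I would reduce this continuum of inequalities to one indexed by the positive integers $k$: writing $k=[t]+1$ and letting $t\uparrow k$, the left-hand side stays constant while the largest attainable lower summation index on the right equals $\lceil k/C\rceil$, so the condition is equivalent to
$$
\sum_{i=k}^\infty x_i^*\le C\sum_{i=\lceil k/C\rceil}^\infty b_i^*,\qquad k=1,2,\dots.
$$
As $\lceil k/C\rceil\ge[k/C]$ and the tails of $(b_i^*)$ decrease in the index, this forces \eqref{EQ12} with the \emph{same} constant $C$. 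Theorem \ref{Th1} then places $x$ in $\Orb(b;l_0,l_1)$ and gives $\|x\|_{\Orb}\le 9(1+[C])$ for $C>1$ and $\|x\|_{\Orb}\le 3[C^{-1}]^{-1}$ for $C\le1$.

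Finally I would let $C\downarrow\|x\|_{\mathcal{E}-\Orb}$. The discrete form shows that enlarging $C$ only increases the factor $C$ and decreases the index $\lceil k/C\rceil$, both of which relax the inequality, so the set of admissible constants is an up-closed interval; substituting its left endpoint $\|x\|_{\mathcal{E}-\Orb}$ into the two bounds above produces exactly the asserted estimates, and combining with the easy inclusion yields $\Orb(b;l_0,l_1)=\mathcal{E}-\Orb(b;l_0,l_1)$. The main obstacle I anticipate is precisely this passage to the infimal constant: because the thresholds $[C]$ and $[C^{-1}]$ jump at integers, one must ensure that the infimum defining $\|x\|_{\mathcal{E}-\Orb}$ is attained rather than merely approached, since a crude limit would degrade $3[C^{-1}]^{-1}$ at integer values of $C^{-1}$. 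It is exactly the step-function nature of $t\mapsto\mathcal{E}(t,x;l_0,l_1)$ and $t\mapsto\mathcal{E}(t,b;l_0,l_1)$, together with $x,b\in l_1$, that secures attainment and keeps the constants sharp.
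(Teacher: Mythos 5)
Your proof is correct and follows exactly the route the paper intends: the paper dismisses this corollary in one line as ``an immediate consequence of Theorem~\ref{Th1}, the definition of the $\mathcal E$-orbit, and formula \eqref{EQ13d}'', and your two inclusions (the soft one via $\mathcal{E}(t,Tb)\le\|T\|\,\mathcal{E}(t/\|T\|,b)$, the quantitative one via \eqref{EQ13d} plus Theorem~\ref{Th1}) are precisely that argument spelled out. Your discrete reformulation with $\lceil k/C\rceil$ and the resulting attainment of the infimal constant is a genuine subtlety the paper glosses over --- without it the bound $3[\|x\|_{{\mathcal E}-\Orb}^{-1}]^{-1}$ would degrade at integer values of $\|x\|_{{\mathcal E}-\Orb}^{-1}$ --- and you handle it correctly.
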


One of the main problems of the interpolation theory of operators is a description of the class of interpolation spaces (groups) with respect to the given compatible pair of spaces (groups). A quasi-normed group $X$ is said to be {\it interpolation} with respect to a pair of quasi-normed groups $(X_0,X_1)$ if $X_0\cap X_1\subset X\subset X_0+X_1$ and every homomorphism bounded in $X_0$ and $X_1$ is bounded in $X$ as well. 
It is well known \cite{BK,BS} that this problem can be effectively resolved for the so-called $\mathcal K$-monotone pairs. 

Let $(X_0,X_1)$ be a compatible pair of quasi-normed groups, $b\in X_0+X_1$. Then, the {\it $\mathcal K$-orbit} of $b$ in the pair  $(X_0,X_1)$ is the set
${\mathcal K}-\Orb(b;X_0,X_1)$ of all $x\in X_0+X_1$ such that
$$
\|x\|_{{\mathcal K}-\Orb}:=\sup\limits_{t>0}
{{\mathcal K}(t,x;X_0,X_1)\over {\mathcal K}(t,b;X_0,X_1)}<\infty ,
$$
where ${\mathcal K}(t,x;\,X_0,X_1)$ is the so-called Peetre's $\mathcal K$-functional defined by 
$$
{\mathcal K}(t,x;\,X_0,X_1):=\inf\{\|x_0\|_{X_0}+t\|x_1\|_{X_1};\,
x=x_0+x_1,x_i\in X_i\}.
$$
A pair $(X_0,X_1)$ is called {\it ${\mathcal K}$-monotone} if
${\mathcal K}-\Orb(b;X_0,X_1)=\Orb(b;X_0,X_1)$ for every
$b\in X_0+X_1$. Historically, the first example of such a pair was the pair $(L_1,L_\infty)$ of functions on a $\sigma$-finite measure space (in particular, the pair of sequence spaces $(l_1,l_\infty)$;  see \cite{Cal}, \cite{Mit}). For further examples of ${\mathcal K}$-monotone pairs and also their properties we refer to the monographs \cite{BL,BK,BS}.


\begin{corollary}\label{cor1}
For every $b=(b_i)_{i=1}^\infty\in l_1$ we have
$$
\Orb(b;l_0,l_1)=  \mathcal K-\Orb(b;l_0,l_1).$$
Also, if $\|x\|_{{\mathcal K}-\Orb}:=\|x\|_{{\mathcal K}-\Orb}(b;l_0,l_1)$, $\|x\|_{\Orb}:=\|x\|_{\Orb}(b;l_0,l_1)$, we have $\|x\|_{{\mathcal K}-\Orb}\le\|x\|_{\Orb}$, and $\|x\|_{\Orb}\le 9([2\|x\|_{{\mathcal K}-\Orb}]+1)$ if $\|x\|_{{\mathcal K}-\Orb}>1/2$, $\|x\|_{\Orb}\le 3[(2\|x\|_{{\mathcal K}-\Orb})^{-1}]^{-1}$ if $\|x\|_{{\mathcal K}-\Orb}\le 1/2$.

Therefore, the pair $(l_0,l_1)$ is ${\mathcal K}$-monotone. 
\end{corollary}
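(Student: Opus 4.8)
The plan is to deduce Corollary \ref{cor1} from Corollary \ref{cor0} by comparing Peetre's $\mathcal K$-functional of the pair $(l_0,l_1)$ with its $\mathcal E$-functional, the two being related by a (discrete) Legendre transform. First I would record the explicit formula for the $\mathcal K$-functional. Since $\|x_0\|_0$ only counts the cardinality of $\supp\,x_0$, the optimal decomposition $x=x_0+x_1$ with $\card\{\supp\,x_0\}=n$ puts the $n$ largest coordinates of $x$ into $x_0$ and the rest into $x_1$; hence, recalling \eqref{EQ13d},
\begin{equation*}
{\mathcal K}(t,x;l_0,l_1)=\inf_{n\ge 0}\Big(n+t\sum_{i=n+1}^\infty x_i^*\Big)=\inf_{u\ge 0}\big(u+t\,{\mathcal E}(u,x;l_0,l_1)\big),
\end{equation*}
the last equality because $u\mapsto{\mathcal E}(u,x;l_0,l_1)$ is constant on each $[n,n+1)$ while $u\mapsto u$ is increasing.

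For the easy inclusion $\Orb(b;l_0,l_1)\subseteq{\mathcal K}-\Orb(b;l_0,l_1)$ and the estimate $\|x\|_{{\mathcal K}-\Orb}\le\|x\|_{\Orb}$, I would use that the $\mathcal K$-functional is an exact interpolation functional: if $x=Tb$ with $T$ bounded in $l_0$ and $l_1$ and $\|T\|_{(l_0,l_1)}\le\lambda$, then applying $T$ to an arbitrary decomposition $b=b_0+b_1$ gives $\|Tb_0\|_0+t\|Tb_1\|_1\le\lambda(\|b_0\|_0+t\|b_1\|_1)$, whence ${\mathcal K}(t,x;l_0,l_1)\le\lambda\,{\mathcal K}(t,b;l_0,l_1)$ for all $t>0$. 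Taking the infimum over admissible $T$ yields $\|x\|_{{\mathcal K}-\Orb}\le\|x\|_{\Orb}$.

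The heart of the proof, and the step I expect to be the main obstacle, is the reverse inclusion together with the sharp constants, i.e.\ recovering the ${\mathcal E}$-orbit condition \eqref{EQ12} from finiteness of $M:=\|x\|_{{\mathcal K}-\Orb}$. Here I would invert the Legendre relation quantitatively. Fix $s>0$, write $V:={\mathcal E}(s,x;l_0,l_1)$ (the case $V=0$ being trivial), and split the range of the infimum at $u=s$: for $u\ge s$ one has $u+t\,{\mathcal E}(u,x)\ge u\ge s$, while for $u<s$ monotonicity gives ${\mathcal E}(u,x)\ge V$, so $u+t\,{\mathcal E}(u,x)\ge tV$; hence ${\mathcal K}(t,x;l_0,l_1)\ge\min(s,tV)$. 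Choosing $t=s/V$ balances the two terms and gives ${\mathcal K}(s/V,x;l_0,l_1)\ge s$. Estimating the right-hand side of $s\le M\,{\mathcal K}(s/V,b;l_0,l_1)$ from above by the single admissible choice $u_0=s/(2M)$ in ${\mathcal K}(s/V,b;l_0,l_1)\le u_0+(s/V)\,{\mathcal E}(u_0,b;l_0,l_1)$, the inequality collapses (after cancelling $s/2$) to
\begin{equation*}
{\mathcal E}(s,x;l_0,l_1)\le 2M\,{\mathcal E}\big(s/(2M),b;l_0,l_1\big),\qquad s>0.
\end{equation*}
Thus the ${\mathcal E}$-orbit condition holds with $C=2M=2\|x\|_{{\mathcal K}-\Orb}$; the balanced choices $t=s/V$ and $u_0=s/(2M)$ are precisely what produces the factor $2$ of the statement.

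Finally I would feed $C=2\|x\|_{{\mathcal K}-\Orb}$ into the converse part of Theorem \ref{Th1} (through the identification \eqref{EQ13d} of the ${\mathcal E}$-functional, exactly as in the derivation of Corollary \ref{cor0}): this shows $x\in\Orb(b;l_0,l_1)$ and yields $\|x\|_{\Orb}\le 9(1+[2\|x\|_{{\mathcal K}-\Orb}])$ when $2\|x\|_{{\mathcal K}-\Orb}>1$ and $\|x\|_{\Orb}\le 3[(2\|x\|_{{\mathcal K}-\Orb})^{-1}]^{-1}$ when $2\|x\|_{{\mathcal K}-\Orb}\le 1$, as claimed. Combining the two inclusions gives $\Orb(b;l_0,l_1)={\mathcal K}-\Orb(b;l_0,l_1)$ for every $b\in l_1$, which is exactly the definition of $(l_0,l_1)$ being ${\mathcal K}$-monotone. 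The only points requiring care are the integer/real passage in the formula for the $\mathcal K$-functional and the degenerate cases $V=0$ and $\|x\|_{{\mathcal K}-\Orb}=0$, all of which are routine.
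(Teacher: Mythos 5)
Your proposal is correct, and its overall architecture coincides with the paper's: the easy inclusion follows from the exact interpolation property of the $\mathcal K$-functional, and the hard inclusion is obtained by converting the $\mathcal K$-orbit condition with constant $C=\|x\|_{{\mathcal K}-\Orb}$ into the $\mathcal E$-orbit inequality ${\mathcal E}(s,x;l_0,l_1)\le 2C\,{\mathcal E}(s/(2C),b;l_0,l_1)$ and then feeding $2C$ into the converse half of Theorem~\ref{Th1} via \eqref{EQ13d}. The difference lies in how that conversion is carried out. The paper invokes the general lemma of Bergh--L\"ofstr\"om relating the $\mathcal K$-functional to the greatest convex minorant ${\mathcal E}^*$ of the $\mathcal E$-functional, namely ${\mathcal E}^*(t,x)=\sup_{s>0}s^{-1}({\mathcal K}(s,x)-t)$ together with ${\mathcal E}(t,x)\le(1-\gamma)^{-1}{\mathcal E}^*(\gamma t,x)$, applied with $\gamma=1/2$; the factor $2$ in the final constant comes from this choice of $\gamma$. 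You instead exploit the explicit formula ${\mathcal K}(t,x;l_0,l_1)=\inf_{u\ge0}\bigl(u+t\,{\mathcal E}(u,x;l_0,l_1)\bigr)$, valid for this particular pair, and invert the Legendre relation by hand (the lower bound ${\mathcal K}(t,x)\ge\min(s,tV)$ with the balanced choices $t=s/V$ and $u_0=s/(2M)$); here the factor $2$ comes from splitting $s=s/2+s/2$. Your route is more elementary and self-contained --- it needs no citation and no convexity of ${\mathcal E}^*$ --- at the cost of using the special structure of $(l_0,l_1)$, whereas the paper's argument is pair-independent. Both arguments produce exactly the same intermediate inequality and hence the same constants $9([2\|x\|_{{\mathcal K}-\Orb}]+1)$ and $3[(2\|x\|_{{\mathcal K}-\Orb})^{-1}]^{-1}$. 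Your handling of the degenerate cases $V=0$ and $x=0$ is adequate.
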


\begin{corollary}\label{cor2}
Let $X$ be a r.i. sequence group such that $l_0\subset X\subset l_1$.  Then, $X$ is an interpolation group with respect to the pair $(l_0,l_1)$ if and only if from the inequality 
$$
\mathcal K(t,x;l_0,l_1)\le C'\mathcal K(t,b;l_0,l_1),\;\;t>0,$$
for some $C'$, and $b\in X$ it follows that $x\in X$ and $\|x\|_X\le C\|b\|_X$, where $C>0$ does not depend on $x$ and $b$.
\end{corollary}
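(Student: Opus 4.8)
The plan is to prove the two implications separately, using that the pair $(l_0,l_1)$ is $\mathcal{K}$-monotone (Corollary \ref{cor1}) as the bridge between bounded homomorphisms and domination of $\mathcal{K}$-functionals. Observe first that $l_0\cap l_1=l_0$ and $l_0+l_1=l_1$, so the inclusions $l_0\cap l_1\subset X\subset l_0+l_1$ required of an interpolation group are exactly the hypotheses $l_0\subset X\subset l_1$; only the boundedness on $X$ of every admissible homomorphism remains to be analysed.

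For the direction ``$\mathcal{K}$-condition $\Rightarrow$ interpolation'', let $T$ be a homomorphism bounded in $l_0$ and $l_1$, and set $M:=\|T\|_{(l_0,l_1)}$. Splitting $b=b_0+b_1$ and using $\|Tb_0\|_0+t\|Tb_1\|_1\le M(\|b_0\|_0+t\|b_1\|_1)$, then taking the infimum over all such splittings, yields
$$
\mathcal{K}(t,Tb;l_0,l_1)\le M\,\mathcal{K}(t,b;l_0,l_1),\qquad t>0.
$$
For $b\in X$ I would now apply the hypothesis to the pair $x=Tb$, $b$ with $C'=M$, obtaining $Tb\in X$ and $\|Tb\|_X\le C\|b\|_X$ with $C$ depending only on $M$. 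Hence $T$ is bounded on $X$, and $X$ is an interpolation group.

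For the converse, assume $X$ is an interpolation group and suppose $\mathcal{K}(t,x;l_0,l_1)\le C'\mathcal{K}(t,b;l_0,l_1)$ for all $t>0$ with $b\in X$. This means precisely that $x\in\mathcal{K}-\Orb(b;l_0,l_1)$ and $\|x\|_{\mathcal{K}-\Orb}\le C'$. By Corollary \ref{cor1} we have $\mathcal{K}-\Orb(b;l_0,l_1)=\Orb(b;l_0,l_1)$, and the quantitative estimates there bound $\|x\|_{\Orb}$ by a quantity $\varphi(C')$ depending only on $C'$; I may therefore pick a homomorphism $T$, bounded in $l_0$ and $l_1$ with $\|T\|_{(l_0,l_1)}\le\varphi(C')+1$, such that $Tb=x$. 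Since $X$ is interpolation, $T$ maps $X$ into $X$, so in particular $x=Tb\in X$.

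The hard part will be the quantitative estimate $\|x\|_X\le C\|b\|_X$ with $C$ independent of $x$ and $b$: the qualitative definition of interpolation only guarantees that each admissible $T$ is bounded on $X$, not a uniform control of $\|T\|_{X\to X}$ by $\|T\|_{(l_0,l_1)}$. I expect to supply this uniform interpolation constant by a closed-graph argument. Consider the natural map sending a homomorphism bounded in $l_0$ and $l_1$ to its restriction to $X$. This map is closed: if $T_n\to T$ in $\|\cdot\|_{(l_0,l_1)}$ and $T_n\to S$ on $X$, then for $x\in X$ one has $T_nx\to Tx$ in $l_1$ (as $\|\cdot\|_{(l_0,l_1)}$ dominates the operator norm on $l_1$) and also $T_nx\to Sx$ in $X$, hence in $l_1$ once the continuity of the embedding $X\subset l_1$ is established; uniqueness of limits in $l_1$ then forces $Tx=Sx$. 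Completeness of $l_0$, $l_1$ and of $X$ (passing to the completion if necessary) together with the closed-graph theorem then deliver a constant $\gamma$ with $\|T\|_{X\to X}\le\gamma\|T\|_{(l_0,l_1)}$, whence $\|x\|_X=\|Tb\|_X\le\gamma(\varphi(C')+1)\|b\|_X$. Verifying the continuity of $X\subset l_1$ and the validity of the closed-graph theorem in this quasi-normed group setting are the delicate points I would check with care.
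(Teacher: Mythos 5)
Both of your implications follow the paper's own proof. For the forward direction you derive $\mathcal K(t,Tb;l_0,l_1)\le\|T\|_{(l_0,l_1)}\,\mathcal K(t,b;l_0,l_1)$ and apply the hypothesis with $C'=\|T\|_{(l_0,l_1)}$; for the converse you read the $\mathcal K$-domination as $x\in{\mathcal K}-\Orb(b;l_0,l_1)$, invoke Corollary~\ref{cor1} to write $x=Tb$ with $T$ bounded in $l_0$ and $l_1$, and conclude $x\in X$ from the interpolation property of $X$. Up to this point your argument is correct and coincides with the paper's.

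Where you depart from the paper is the closed-graph addendum, and there the proposal does not go through. You correctly sense that the definition of an interpolation group only gives the qualitative statement that each admissible $T$ is bounded on $X$, so the uniformity of $C$ needs justification (the paper simply asserts it without comment). But the closed-graph route is blocked in this setting. Besides the fact that $X$ is not assumed complete (and passing to its completion changes the group whose interpolation property is at issue), the decisive obstruction is the non-homogeneity of $\|\cdot\|_0$: if $\|T\|_{l_0\to l_0}<1$, then $\|T(\lambda e_i)\|_0<1$ for every unit vector $e_i$ and every scalar $\lambda$, so $T(\lambda e_i)=0$, hence by additivity $T=0$ on $l_0$ and, by $l_1$-boundedness and density, $T=0$ everywhere. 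Thus the set $\{T:\|T\|_{(l_0,l_1)}<1\}$ is $\{0\}$, rescaling $T\mapsto\lambda T$ does not change $\|T\|_{l_0\to l_0}$, and continuity at the origin of the restriction map $T\mapsto T|_X$ --- which is all a closed-graph theorem for metrizable groups could deliver --- says nothing about operators with $\|T\|_{(l_0,l_1)}\ge 1$. Consequently the bound $\|T\|_{X\to X}\le\gamma\|T\|_{(l_0,l_1)}$ cannot be extracted this way; for quasi-normed groups, boundedness in the sense $\sup_{x\ne0}\|Tx\|/\|x\|<\infty$ is strictly stronger than continuity (on $l_0$ the topology is discrete, so every map is continuous). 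If the uniform constant is wanted, it must be secured differently, e.g.\ by building it into the definition of interpolation group or by exploiting the explicit operator $Q'\sigma_m$ produced in the proof of Theorem~\ref{Th1}. In short: your core argument matches the paper's; the extra machinery you add targets a real gap that the paper shares, but does not close it.
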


In what follows, $[\alpha]$ is the integer part of a real number $\alpha$. Moreover, if $I$ and $J$ are subsets of $\mathbb{N}$ such that $i<j$ for all $i\in I$, $j\in J$, then we write $I<J$. In particular, instead of $I<\{j\}$ (resp. $\{i\}<J$) we shall write $I<j$ (resp. $i<J$).

\section{\protect \medskip Proofs}

We begin with proving some auxiliary results.

\begin{propos}\label{prop1}
Let $b=(b_i)_{i=1}^\infty\in l_1$. If $a=(a_i)_{i=1}^\infty\in \Orb(b;l_0,l_1)$, then  
\begin{equation}\label{EQ14d}
\sum_{i=k}^\infty a_i^*\le \|a\|_{\Orb}\cdot\sum_{i=[k/\|a\|_{\Orb}]}^\infty b_i^*,\;\;k=1,2,\dots
\end{equation}
\end{propos}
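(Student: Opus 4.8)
The plan is to reduce \eqref{EQ14d} to a scaling inequality for the approximative ${\mathcal E}$-functional and then pass to the infimum over admissible homomorphisms. Throughout write $\nu:=\|a\|_{\Orb}$ and recall from \eqref{EQ13d} that ${\mathcal E}(t,x;l_0,l_1)=\sum_{i=[t]+1}^\infty x_i^*$; in particular $\sum_{i=k}^\infty a_i^*={\mathcal E}(k-1,a;l_0,l_1)$, and similarly for $b$. Note also that, since $l_0\subset l_1$, every $a\in\Orb(b;l_0,l_1)$ lies in $l_1$, so all the tails below are finite.

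First I would fix a bounded homomorphism $T$ with $a=Tb$, set $\lambda_0:=\|T\|_{l_0\to l_0}$, $\lambda_1:=\|T\|_{l_1\to l_1}$ and $\lambda:=\max(\lambda_0,\lambda_1)=\|T\|_{(l_0,l_1)}$, and establish the key estimate
$$
{\mathcal E}(\lambda n,a;l_0,l_1)\le \lambda\,{\mathcal E}(n,b;l_0,l_1),\qquad n=0,1,2,\dots .
$$
For this, let $n$ be an integer and let $b_0\in l_0$ be any sequence with $\|b_0\|_0\le n$. Additivity of $T$ gives $a-Tb_0=T(b-b_0)$; boundedness on $l_0$ yields $Tb_0\in l_0$ with $\|Tb_0\|_0\le\lambda_0\|b_0\|_0\le\lambda_0 n\le\lambda n$; and boundedness on $l_1$ yields $T(b-b_0)\in l_1$ with $\|T(b-b_0)\|_1\le\lambda_1\|b-b_0\|_1\le\lambda\|b-b_0\|_1$. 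Thus $Tb_0$ is admissible in the definition of ${\mathcal E}(\lambda n,a;l_0,l_1)$, whence ${\mathcal E}(\lambda n,a;l_0,l_1)\le\lambda\|b-b_0\|_1$, and taking the infimum over all such $b_0$ gives the displayed inequality.

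Next I would rewrite this via \eqref{EQ13d} as the tail-sum estimate
$$
\sum_{i=[\lambda n]+1}^\infty a_i^*\le \lambda\sum_{i=n+1}^\infty b_i^*,\qquad n=0,1,2,\dots .
$$
For fixed $k$ I would take $n$ to be the largest integer with $\lambda n<k$; then $[\lambda n]+1\le k$, so the left-hand tail majorizes $\sum_{i=k}^\infty a_i^*$, while $n+1$ equals $[k/\lambda]$ when $k/\lambda\in\mathbb{N}$ and equals $[k/\lambda]+1$ otherwise. In either case this produces, for this single $\lambda=\|T\|_{(l_0,l_1)}$, the bound $\sum_{i=k}^\infty a_i^*\le\lambda\sum_{i=[k/\lambda]}^\infty b_i^*$, the non-integer case in fact giving the sharper lower index $[k/\lambda]+1$.

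Finally I would let $T$ range over all admissible homomorphisms, i.e.\ let $\lambda\downarrow\nu$. For fixed $k$ the step function $\lambda\mapsto[k/\lambda]$ is constant on a right-neighbourhood of $\nu$ unless $\nu$ is one of its jump points $k/m$. If $k/\nu\notin\mathbb{N}$, then $[k/\lambda]=[k/\nu]$ for $\lambda$ slightly above $\nu$ and the bound passes to the limit to give \eqref{EQ14d}. The delicate point is the jump case $k/\nu=m\in\mathbb{N}$: here for $\lambda$ slightly above $\nu$ one has $k/\lambda\notin\mathbb{N}$ with $[k/\lambda]=m-1$, so the sharper non-integer estimate reads $\sum_{i=k}^\infty a_i^*\le\lambda\sum_{i=m}^\infty b_i^*$, and letting $\lambda\downarrow\nu$ yields $\sum_{i=k}^\infty a_i^*\le\nu\sum_{i=[k/\nu]}^\infty b_i^*$, exactly \eqref{EQ14d}. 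I expect this passage to the infimum across the jump points of the floor function to be the main obstacle, since naive limiting loses the term $b_{m-1}^*$; the refined choice of $n$ in the previous paragraph is what repairs it. Everything else is a routine unwinding of the definitions of the ${\mathcal E}$-functional and of boundedness on $l_0$ and $l_1$.
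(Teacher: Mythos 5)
Your proof is correct and follows essentially the same route as the paper: you establish the transformation estimate ${\mathcal E}(\|T\|t,Tb;l_0,l_1)\le\|T\|\,{\mathcal E}(t,b;l_0,l_1)$ by unwinding the definition of the ${\mathcal E}$-functional, identify it with the tail sums via \eqref{EQ13d}, and pass to the infimum over admissible $T$. Your handling of the jump points of $\lambda\mapsto[k/\lambda]$ when letting $\lambda\downarrow\|a\|_{\Orb}$ is in fact more explicit than the paper's brief ``since $\varepsilon>0$ is arbitrary'' step, but it is the same argument.
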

\begin{proof}
First, let $(X_0, X_1)$ be a pair of quasi-normed groups, and let $T$ be a bounded homomorphism in $X_0$ and $X_1$. Then,
\begin{eqnarray*}
{\mathcal E}(t,Tx;X_0,X_1)&=&\inf\{\|Tx-y_0\|_{X_1}:\,\|y_0\|_{X_0}\le t\}\\
&\le& \inf\{\|Tx-Tx_0\|_{X_1}:\,\|x_0\|_{X_0}\le t/\|T\|\}\\ &\le& \|T\| {\mathcal E}(t/\|T\|,x;X_0,X_1).
\end{eqnarray*}

Since $a=(a_i)_{i=1}^\infty\in \Orb(b;l_0,l_1)$, for every $\varepsilon>0$ there exists a homomorphism $T$ such that $Tb = a$ and $\|T\|:=\|T\|_{(l_0,l_1)}\le\|a\|_{\Orb}+\varepsilon $. Then, applying the preceding estimate for the pair $(l_0,l_1)$ and taking into account equation \eqref{EQ13d}, we obtain
$$
\sum_{i=[t]+1}^\infty a_i^*\le\|T\|\sum_{i=[t/\|T\|]+1}^\infty b_i^* \le(\|a\|_{\Orb}+\varepsilon )\sum_{i=[t/(\|a\|_{\Orb}+\varepsilon)]+1}^\infty b_i^*,\;\;t>0.
$$
Since $\varepsilon> 0$ is arbitrary, this inequality implies \eqref{EQ14d}.
\end{proof}

The following statement will play a key role in the future.

\begin{propos}\label{prop2}
Let $a=(a_i)_{i=1}^\infty$ and $b=(b_i)_{i=1}^\infty\in l_1$ be two sequences satisfying the inequality 
\begin{equation}\label{EQ13aa}
\sum_{i=k}^\infty a_i^*\le \sum_{i=k}^\infty b_i^*,\,\,k=1,2,\dots
\end{equation}
Then, there exists a homomorphism $Q:\,l_1\to l_1$ such that $\|Q\|_{l_1\to l_1}\le 2$,  $\|Q\|_{l_0\to l_0}\le 3$, and $Qb=a$. 
\end{propos}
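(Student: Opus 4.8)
The plan is to reduce the statement to an explicit construction of a nonnegative ``transfer matrix'' and then to build that matrix directly from the tail-sum hypothesis \eqref{EQ13aa}.

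\emph{Normalisation and reduction to a matrix.} First I would reduce to the case $a=a^{*}$, $b=b^{*}$ (nonincreasing, nonnegative). If $\sigma,\tau$ are permutations of $\mathbb N$ with $b_k^{*}=|b_{\sigma(k)}|$ and $a_k^{*}=|a_{\tau(k)}|$, the signed permutation homomorphisms $P_1e_{\sigma(k)}=\mathrm{sgn}(b_{\sigma(k)})e_k$ and $P_2e_k=\mathrm{sgn}(a_{\tau(k)})e_{\tau(k)}$ act as isometries on both $l_0$ (they preserve the cardinality of supports) and $l_1$, and satisfy $P_1b=b^{*}$, $P_2a^{*}=a$; so if $Q_0$ solves the problem for $(a^{*},b^{*})$ then $Q:=P_2Q_0P_1$ solves it for $(a,b)$ with the same norms. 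Writing $A_k=\sum_{i\ge k}a_i$ and $B_k=\sum_{i\ge k}b_i$, hypothesis \eqref{EQ13aa} becomes $A_k\le B_k$ for all $k$. I would then observe that it suffices to find $q_{ij}\ge0$ with $q_{ij}=0$ for $i>j$, satisfying $\sum_j q_{ij}b_j=a_i$ for every $i$, with every column sum $\sum_i q_{ij}\le2$ and every column having at most three nonzero entries: setting $Qe_j:=\sum_i q_{ij}e_i$ (and $Qe_j=0$ when $b_j=0$), the column-sum bound gives $\|Q\|_{l_1\to l_1}=\sup_j\sum_i q_{ij}\le2$, the bound on the number of nonzero entries per column gives $\|Qx\|_0\le\sum_{j\in\supp x}\#\{i:q_{ij}\ne0\}\le3\|x\|_0$, and $Qb=\sum_i\bigl(\sum_j q_{ij}b_j\bigr)e_i=a$.

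\emph{A feasible transport.} Put $\pi_{ij}=q_{ij}b_j$, so the conditions become a transportation problem: meet demands $a_i$ from supplies $b_j$ using only pairs $j\ge i$, with $\sum_i\pi_{ij}\le b_j$. I would construct $\pi$ explicitly by a quantile coupling: on $(0,A_1]$ let source $j$ occupy the layer $(B_{j+1},B_j]$ and target $i$ the layer $(A_{i+1},A_i]$, and set $\pi^{0}_{ij}=\bigl|(A_{i+1},A_i]\cap(B_{j+1},B_j]\bigr|$. Since $A_i\le B_i$, a level $s\le A_i$ lies in a source layer of index $\ge i$, so $\pi^{0}_{ij}=0$ unless $j\ge i$; moreover $\sum_j\pi^{0}_{ij}=a_i$ and $\sum_i\pi^{0}_{ij}\le b_j$. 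This already gives column sums $\le1$, but the column supports can be arbitrarily large (e.g. for $b=M(1,\dots,1,0,\dots)$ and $a=\varepsilon(1,\dots,1,0,\dots)$ with $n\varepsilon\le M$ the single source $n$ serves all $n$ targets).

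\emph{Degree reduction (the crux).} The main difficulty is to cap each column at three entries while keeping column sums $\le2$, and here I would use monotonicity together with the factor-$2$ slack. Call a target $i$ \emph{interior} if its layer lies inside one source layer $(B_{j+1},B_j]$; then $a_i\le B_j-B_{j+1}=b_j\le b_i$. I would modify $\pi^0$ by removing every interior target from its source and serving it entirely from its own source, i.e. setting $\pi_{ii}=a_i$ for interior $i$ and keeping $\pi_{ij}=\pi^0_{ij}$ otherwise. The demands $a_i$ and the support condition $i\le j$ are preserved, and $a_i\le b_i$ keeps the new diagonal admissible. After this change each source $j$ feeds at most the two targets whose layers contain the endpoints $B_j,B_{j+1}$ (every remaining non-interior target overlapping the layer must contain one of them) plus, possibly, the single reassigned target $j$ --- at most three in all; and its load is the at-most-$b_j$ boundary mass plus at most $a_j\le b_j$, hence $\le2b_j$. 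I expect the delicate point to be exactly this last count: verifying that removing the interior targets leaves only the two endpoint-straddling targets on each source, so that the degree never exceeds three and the doubled capacity is never exceeded.
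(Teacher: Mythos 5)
Your construction is correct, but it takes a genuinely different route from the paper's. The paper also first reduces to nonincreasing $a,b$, but then argues pointwise: it splits $\mathbb{N}$ into $J=\{i:\,a_i>2b_i\}$, $I=\{i:\,a_i<b_i\}$ and $K=\{i:\,b_i\le a_i\le 2b_i\}$, handles $K$ and most of $I$ by diagonal multipliers with factors in $[0,2]$, and transports mass only at the exceptional positions: the deficit $\delta_k=a_{j_k}-b_{j_k}$ at each $j_k\in J$ is collected greedily from the surpluses $\eta_i=b_i-a_i$ at positions $i\in I$ to the right of $j_k$, the feasibility of the greedy allocation resting on a tail inequality (the paper's \eqref{EQ13}) deduced from \eqref{EQ13aa}; each surplus position then feeds at most two deficit positions plus itself, which is where the constants $2$ and $3$ come from. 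You instead take the full monotone (quantile) coupling between the tail measures of $b$ and $a$ --- so that the support condition $q_{ij}=0$ for $i>j$ falls out of $A_k\le B_k$ automatically --- and then repair its unbounded fan-out by re-routing every interior target to its own diagonal via $a_i\le b_j\le b_i$; the two endpoint-straddling targets plus the one reassigned diagonal give the same $3$, and the extra diagonal load $a_j\le b_j$ gives the same $2$. Your version is arguably more conceptual and avoids the paper's somewhat delicate induction for \eqref{EQ13} and the $\eta'_{i_k},\eta''_{i_k}$ bookkeeping, at the price of the layer formalism. If you write it up, the points to make fully explicit are: (i) that a non-interior target meeting the source layer $(B_{j+1},B_j]$ must have its layer contain $B_j$ or $B_{j+1}$, and that each of these levels lies in at most one target layer (this is exactly what caps the post-reassignment degree at $2+1$); and (ii) the degenerate cases $a_i=0$ or $b_j=0$, which are harmless since $b_j=0$ forces $a_i=0$ for all $i\ge j$ by monotonicity and \eqref{EQ13aa}.
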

\begin{proof}
Clearly, there are linear operators $T_k$, $k=1,2$, such that $T_1((a_i^*))=a$ and $T_2b=(b_i^*)$ with $\|T_k\|_{l_1\to l_1}=\|T_k\|_{l_0\to l_0}=1$, $k=1,2$. Therefore, without loss of generality we may assume that $a_i=a_i^*$ and $b_i=b_i^*$, $i=1,2,\dots$ Hence, \eqref{EQ13aa}  may be rewritten as follows:
\begin{equation}\label{EQ13a}
\sum_{i=k}^\infty a_i\le \sum_{i=k}^\infty b_i,\,\,k=1,2,\dots
\end{equation}

Let $J:=\{i:\,a_i>2b_i\}$, $I:=\{i:\,a_i<b_i\}$, and $K:=\{i:\,b_i\le a_i\le 2b_i\}$. Clearly, $\mathbb{N}=J\cup I\cup K$. If $J=\emptyset$, then to conclude the proof it is suffices to take the operator 
$$
Qx=\sum_{i\in I}u_ix_ie_i+ \sum_{i\in K}(1+v_i)x_ie_i,\;\;x=(x_i)_{i=1}^\infty\in l_1,
$$
where $\{e_i\}_{i=1}^\infty$ is the unit vector basis in $l_1$, $u_i:=a_i/b_i$, $i\in I$, and $v_i:=(a_i-b_i)/b_i$, $i\in K$. Indeed, one can easily check that 
$\|Q\|_{l_1\to l_1}\le 2$, $\|Q\|_{l_0\to l_0}=1$ and $Qb=a$. 

Now, suppose that $J=\{j_k\}_{k=1}^{k_0}$, where $1\le k_0\le\infty$ and $j_1<j_2<\dots$ Denoting $\delta_k:=a_{j_k}-b_{j_k}$, $1\le k\le k_0$ and $\eta_i:=b_{i}-a_{i}$, $i\in I$, we claim that for each $1\le m\le k_0$
\begin{equation}\label{EQ13}
\sum_{i\in I, i>j_m} \eta_i\ge \sum_{m\le k\le k_0} \delta_k+ \Big(\delta_{m-1}-\sum_{i\in I, j_{m-1}<i<j_m} \eta_i\Big)_+,
\end{equation}
where we set $\delta_0=0$ and $j_0=1$. 

Since for $m=1$ inequality \eqref{EQ13} is an easy consequence of inequality \eqref{EQ13a} for $k=j_1$, we can assume that
$2\le m\le k_0$. First, from the notation and \eqref{EQ13a} it follows that 
\begin{equation}\label{EQ14}
\sum_{i\in I, i>j_m} \eta_i\ge
\sum_{i\ge j_m}(b_i-a_i)+\sum_{m\le k\le k_0} \delta_k\ge \sum_{m\le k\le k_0} \delta_k.
\end{equation}
Therefore, we obtain \eqref{EQ13} for $m$ such that 
$$
\delta_{m-1}\le\sum_{i\in I, j_{m-1}<i<j_m} \eta_i.
$$
Otherwise, assume that for some $2\le m\le k_0$ 
$$
\delta_{m-1}>\sum_{i\in I, j_{m-1}<i<j_m} \eta_i$$
and \eqref{EQ13} does not hold. Then, we have
$$
\sum_{i\in I,i>j_{m-1}} \eta_i< \sum_{k=m-1}^{k_0} \delta_k,
$$
whence
$$
\sum_{i=j_{m-1}}^\infty(b_i-a_i)\le \sum_{i\in I,i>j_{m-1}} \eta_i- \sum_{k=m-1}^{k_0} \delta_k<0.
$$
Since this contradicts  \eqref{EQ13a} for $k=j_{m-1}$,  inequality \eqref{EQ13} is proved. 

Let $l_1:=\min\{i\in I:\,i>j_1\}$. Thanks to \eqref{EQ13}, we can choose $i_1\in I$, $i_1>j_1$ so that
$$
\sum_{i\in I,j_1<i< i_1} \eta_i<\delta_1\le\sum_{i\in I,j_1<i\le i_1} \eta_i.$$ 
Observe that $i_1>l_1$. Indeed, by the definition of the set $J$, for each $i\in I$, $i>j_1$ we have 
$$
\delta_1=a_{j_1}-b_{j_1}>b_{j_1}\ge b_i\ge\eta_{i}.$$
Therefore, the set $I_1:=\{i\in I:\,l_1\le i\le m_1\}$, where $m_1:=\max\{i\in I:\,i<i_1\}$, is not empty. Setting
$$
\eta'_{i_1}:=\delta_1-\sum_{i\in I_1} \eta_i,$$
we have 
\begin{equation}\label{EQ16}
0<\eta'_{i_1}\le \eta_{i_1}=b_{i_1}-a_{i_1}\;\;\mbox{and}\;\;\sum_{i\in I_1} \eta_i+\eta'_{i_1}=\delta_1.
\end{equation}
Next, we consider the cases when $i_1<j_2$ and $i_1>j_2$ separately. 

If $i_1<j_2$, then $\delta_1\le \sum_{i\in I,j_1<i<j_2} \eta_i$. Since from \eqref{EQ13} it follows that 
$$
\sum_{i\in I,i>j_2}\eta_i\ge \sum_{2\le k\le k_0} \delta_k,
$$
we can proceed next as above, setting $l_2:=\min\{i\in I:\,i>j_2\}$ and choosing $i_2\in I$, $i_2>j_2$ so that
$$
\sum_{i\in I,j_2<i< i_2} \eta_i<\delta_2\le\sum_{i\in I,j_2<i\le i_2} \eta_i.$$ 
Again  $i_2>l_2$ and hence the set $I_2:=\{i\in I:\,l_2\le i\le m_2\}$, where $m_2:=\max\{i\in I:\,i<i_2\}$, is not empty. Then, if
$$
\eta'_{i_2}:=\delta_2-\sum_{i\in I_2} \eta_i,$$
we have 
\begin{equation}\label{EQ16'}
0<\eta'_{i_2}\le \eta_{i_2}=b_{i_2}-a_{i_2}\;\;\mbox{and}\;\;\sum_{i\in I_2} \eta_i+\eta'_{i_2}=\delta_2.
\end{equation}

Let now $i_1>j_2$. In this case 
$$
\delta_1-\sum_{i\in I,j_1<i<j_2} \eta_i=\sum_{i\in I,j_2<i<i_1} \eta_i+\eta'_{i_1}.$$
Then, by \eqref{EQ13}, we have 
$$
\sum_{i\in I,i>j_2}\eta_i\ge \sum_{2\le k\le k_0} \delta_k+\sum_{i\in I,j_2<i<i_1} \eta_i+\eta'_{i_1},
$$
whence
$$
\sum_{i\in I,i>i_1}\eta_i+\eta''_{i_1}\ge \sum_{2\le k\le k_0} \delta_k,
$$
where $\eta''_{i_1}:=\eta_{i_1}-\eta'_{i_1}\ge 0$. Also, from the definition of the set $J$ and inequality $j_2<i_1$ it follows
$$
\delta_2=a_{j_2}-b_{j_2}>b_{j_2}\ge b_{i_1}\ge \eta''_{i_1}.$$
Therefore, setting $l_2:=\min\{i\in I:\,i>i_1\}$, we can find $i_2\in I$, $i_2\ge l_2>i_1$ such that
$$
\eta''_{i_1}+\sum_{i\in I,l_2\le i< i_2} \eta_i<\delta_2\le \eta''_{i_1}+\sum_{i\in I,l_2\le i\le i_2} \eta_i.$$ 
In the case when $i_2=l_2$ we put $I_2=\emptyset$. If  $i_2>l_2$, we define $I_2:=\{i\in I:\,l_2\le i\le m_2\}$, where $m_2:=\max\{i\in I:\,i<i_2\}$. Then, if
$$
\eta'_{i_2}:=\delta_2-\sum_{i\in I_2} \eta_i-\eta''_{i_1},$$
we have
\begin{equation}\label{EQ16''}
\sum_{i\in I_2} \eta_i+\eta'_{i_2}+\eta''_{i_1}=\delta_2.
\end{equation}
Also, from the definition of $\eta''_{i_1}$ it follows 
\begin{equation}\label{EQ19}
\eta'_{i_1}+\eta''_{i_1}=\eta_{i_1}=b_{i_1}-a_{i_1}.
\end{equation}

Next, we proceed similarly considering again two different cases when $i_2<j_3$ and $i_2>j_3$ separately and using inequality  \eqref{EQ13}.

Suppose first that $k_0=\infty$. 
As a result of the above procedure, we get sets $I_k:=\{i\in I:\,l_k\le i\le m_k\}$, $k=1,2,\dots$ (some of them may be empty), sequences $\{i_j\}_{j=1}^\infty\subset I$,
 $\{\eta'_{i_j}\}_{j=1}^\infty$ and $\{\eta''_{i_j}\}_{j=1}^\infty$ such that $I_1<i_1<I_2<i_2<\dots$ and 
\begin{equation}\label{EQ21}
\sum_{i\in I_k} \eta_i+\eta'_{i_k}+\eta''_{i_{k-1}}=\delta_k,\;\;k=1,2,\dots,
\end{equation} 
\begin{equation}\label{EQ20}
\eta'_{i_k}+\eta''_{i_k}\le b_{i_k}-a_{i_k},\;\;k=1,2,\dots,
\end{equation}
where $\eta''_{i_k}=0$ if $i_{k}<j_{k+1}$, $k=1,2,\dots$ (see \eqref{EQ16} --- \eqref{EQ19}).

Denote $h_i:=\eta_i/b_i$, $i\in \cup_{k=1}^\infty I_k$, $h'_{i_j}:=\eta'_{i_j}/b_{i_j}$, $j=1,2,\dots$, and 
\begin{equation}\label{EQ21.5}
h''_{i_j}:=\frac{\eta''_{i_j}}{b_{i_j}-\eta'_{i_j}}=\frac{\eta''_{i_j}}{b_{i_j}(1-h'_{i_j})},\;\;j=1,2,\dots
\end{equation}
Then from \eqref{EQ21} it follows that 
\begin{equation}\label{EQ22}
\sum_{i\in I_k} h_ib_i+h'_{i_k}b_{i_k}+h''_{i_{k-1}}(1-h'_{i_{k-1}}) b_{i_{k-1}}=\delta_k,\;\;k=1,2,\dots,
\end{equation}
where $h'_{i_0}=h''_{i_0}=0$. Moreover, let  $u_i:=a_i/b_i$, $i\in I':=I\setminus \cup_{k=1}^\infty (I_k\cup \{i_k\})$, and $v_i:=(a_i-b_i)/b_i$, $i\in K$. Now, define the linear operator $T: l_1\to l_1$ by
\begin{eqnarray}
Tx:&=&\sum_{k=1}^\infty\Big(x_{j_k}+\sum_{i\in I_k} h_ix_{i}+h'_{i_k}x_{i_k}+h''_{i_{k-1}}(1-h'_{i_{k-1}}) x_{i_{k-1}}\Big)e_{j_k}\nonumber\\
&+&\sum_{k=1}^\infty\sum_{i\in I_k} (1-h_i)x_{i}e_i
+\sum_{k=1}^\infty(1-h'_{i_k})(1-h''_{i_k})x_{i_k}e_{i_k}\nonumber\\ &+& \sum_{i\in I'}u_ix_ie_i+ \sum_{i\in K}(1+v_i)x_ie_i,\;\;x=(x_i)_{i=1}^\infty\in l_1,
\label{EQ23}
\end{eqnarray}
where $\{e_i\}_{i=1}^\infty$ is the unit vector basis in $l_1$.

Since $0\le h_i<1$, $i\in I_k$, $0\le h'_{i_k}, h''_{i_k}< 1$, $k=1,2,\dots$, $0\le u_i< 1$, $i\in I'$, $0\le v_i\le 1$, $i\in K$, then for every $x=(x_i)_{i=1}^\infty\in l_1$ we have 
$$
\|Tx\|_{l_1}\le \sum_{i\not\in K}|x_{i}|+
\sum_{i\in K} (1+v_i)|x_{i}|\le 2\|x\|_{l_1},$$ 
whence $\|T\|_{l_1\to l_1}\le 2$. 

Further, if $i\ne j_k$, $k=1,2,\dots$, then $x_i=0$ if and only if $(Tx)_i=0$. Moreover, from $(Tx)_{j_k}\ne 0$ it follows that at least one of the following relations holds: $x_{j_k}\ne 0$, or $x_i\ne 0$ for some $i\in I_k$, or $x_{i_k}\ne 0$, or $x_{i_{k-1}}\ne 0$. Since $I_{k_1}\cap I_{k_2}=\emptyset$ if $k_1\ne k_2$, we conclude that
$$
{\rm card}\{i:\,(Tx)_i\ne 0\}\le 3 {\rm card}\{i:\,x_i\ne 0\},$$
and so $\|T\|_{l_0\to l_0}\le 3$. At last, from \eqref{EQ20}, \eqref{EQ21.5} and \eqref{EQ22} it follows that $Tb=a$.

Now, suppose that $k_0<\infty$. Then, as above, we construct sets $I_k:=\{i\in I:\,l_k\le i\le m_k\}$, $k<k_0$ (some of them may be empty), sequences $\{i_j\}_{j<k_0}\subset I$,
 $\{\eta'_{i_j}\}_{j<k_0}$ and $\{\eta''_{i_j}\}_{j<k_0}$ such that $I_1<i_1<I_2<i_{2}<\dots<I_{k_0-1}<i_{k_0-1}$ and 
\begin{equation}\label{EQ21*}
\sum_{i\in I_k} \eta_i+\eta'_{i_k}+\eta''_{i_{k-1}}=\delta_k,\;\;k<k_0,
\end{equation} 
\begin{equation}\label{EQ20*}
\eta'_{i_k}+\eta''_{i_k}\le b_{i_k}-a_{i_k},\;\;k<k_0,
\end{equation}
where $\eta''_{i_k}=0$ provided that $i_{k}<j_{k+1}$, $k<k_0-1$. 

If $i_{k_0-1}<j_{k_0}$, we have
$$
\delta_{k_0-1}\le \sum_{i\in I,j_{k_0-1}<i<j_{k_0}} \eta_i,$$
and hence from \eqref{EQ13} it follows that 
\begin{equation}\label{EQ24}
\sum_{i\in I,i>j_{k_0}}\eta_i\ge \delta_{k_0}.
\end{equation}

Otherwise, we have $i_{k_0-1}>j_{k_0}$.  Then, 
$$
\delta_{k_0-1}-\sum_{i\in I,j_{k_0-1}<i<j_{k_0}} \eta_i=\sum_{i\in I,j_{k_0}<i<i_{k_0-1}} \eta_i+\eta'_{i_{k_0-1}}.$$
Therefore, by \eqref{EQ13}, in this case we have 
$$
\sum_{i\in I,i>j_{k_0}}\eta_i\ge \delta_{k_0}+\sum_{i\in I,j_{k_0}<i<i_{k_0-1}} \eta_i+\eta'_{i_{k_0-1}},
$$
or
\begin{equation}\label{EQ25}
\sum_{i\in I,i>i_{k_0-1}}\eta_i+\eta''_{i_{k_0-1}}\ge \delta_{k_0},
\end{equation}
because $\eta''_{i_{k_0-1}}=\eta_{i_{k_0-1}}-\eta'_{i_{k_0-1}}$.

We set now $I_{k_0}:=\{i\in I:\, i>j_{k_0}\}$ (resp. $I_{k_0}:=\{i\in I:\, i>i_{k_0-1}\}$) in the case when \eqref{EQ24}  (resp. \eqref{EQ25}) holds.
Then, proceeding as above, we define the operator $T$ with required properties precisely as in \eqref{EQ23} with the only difference that instead of infinite sequences $\{I_k\}_{k=1}^\infty$, $\{i_k\}_{k=1}^\infty$, $\{\eta'_{i_k}\}_{k=1}^\infty$ and $\{\eta''_{i_k}\}_{k=1}^\infty$
we use the finite ones $\{I_k\}_{k=1}^{k_0}$, $\{i_k\}_{k=1}^{k_0-1}$, $\{\eta_{i_k}'\}_{k=1}^{{k_0-1}}$ and $\{\eta''_{i_k}\}_{k=1}^{{k_0-1}}$.
This completes the proof. 
\end{proof}

\begin{proof}[Proof of Theorem~\ref{Th1}]
If $a=(a_i)_{i=1}^\infty\in \Orb(b;l_0,l_1)$, then from Proposition~\ref{prop1} it follows inequality \eqref{EQ12} with $C=\|a\|_{\Orb}$.

Before proving the converse, let us recall the definition of the {\it dilation operators} in sequence spaces (see, for example, \cite[Sec.~II.8, p.~165]{KPS}). Given $m \in \mathbb N$, by $ {\sigma}_m$ and ${\sigma}_{1/m}$ we set: if $a = (a_n)_{n=1}^\infty$, then
$$
{\sigma}_m a = \left( ( {\sigma}_m a)_n \right)_{n=1}^{\infty} = \big (a_{[\frac{m-1+n}{m}]} \big)_{n=1}^{\infty} 
= \big ( \overbrace {a_1, a_1, \ldots, a_1}^{m}, \overbrace {a_2, a_2, \ldots, a_2}^{m}, \ldots \big)
$$
and
$$
{\sigma}_{1/m} a =  \left( ( {\sigma}_{1/m} a)_n \right)_{n=1}^{\infty} = \Big (\frac{1}{m} \sum_{k=(n-1)m + 1}^{nm} a_k \Big)_{n=1}^{\infty}.$$
It is easy to check that $ {\sigma}_m$ and ${\sigma}_{1/m}$,  $m \in \mathbb N$, are homomorphisms of the groups $l_0$ and $l_1$, $\|{\sigma}_m\|_{l_0 \to l_0}=\|{\sigma}_m\|_{l_1\to l_1}=m$, $\|{\sigma}_{1/m}\|_{l_0 \to l_0}=\|{\sigma}_{1/m}\|_{l_1\to l_1}=1/m$.

Assume first that $C>1$. Then, one can easily check that, by the definition of ${\sigma}_m$,  
$$
C\sum_{i=[k/C]}^\infty b_i^*\le\sum_{i=k}^\infty ({\sigma}_{3([C]+1)}b)_i^*, \;\;k=1,2,\dots
$$
Combining this inequality with \eqref{EQ12}, we get
$$
\sum_{i=k}^\infty a_i^*\le \sum_{i=k}^\infty ({\sigma}_{3([C]+1)}b)_i^*, \;\;k=1,2,\dots
$$
Hence, by Proposition~\ref{prop2}, there exists a homomorphism $Q':\,l_1\to l_1$, $\|Q'\|_{l_1\to l_1}\le 2$, $\|Q'\|_{l_0\to l_0}\le 3$, such that $a=Q'{\sigma}_{3([C]+1)}b$. Since for the homomorphism $Q:=Q'{\sigma}_{3([C]+1)}$, we have $\|Q\|_{l_1\to l_1}\le 6([C]+1)$ and  $\|Q\|_{l_0\to l_0}\le 9([C]+1)$, the proof is completed if $C>1$. 

Let now $C\le 1$. Then, we have 
$$
C\sum_{i=[k/C]}^\infty b_i^*\le\sum_{i=k}^\infty ({\sigma}_{[C^{-1}]^{-1}}b)_i^*, \;\;k=1,2,\dots
$$
Therefore, from \eqref{EQ12} it follows that
$$
\sum_{i=k}^\infty a_i^*\le \sum_{i=k}^\infty ({\sigma}_{[C^{-1}]^{-1}}b)_i^*, \;\;k=1,2,\dots
$$
Reasoning as in the case $C>1$, we get that $a=Qb$ for some homomorphism $Q$ of  the pair $(l_0,l_1)$ such that $\|Q\|_{l_1\to l_1}\le 2[C^{-1}]^{-1}$ and $\|Q\|_{l_0\to l_0}\le 3[C^{-1}]^{-1}$. Thus, the theorem is proved. 
\end{proof}

Corollary~\ref{cor0} is an immediate consequence of Theorem~\ref{Th1}, the definition of the ${\mathbb E}$-orbit, and formula \eqref{EQ13d}.

\begin{proof}[Proof of Corollary~\ref{cor1}]
Let $b=(b_i)_{i=1}^\infty\in l_1$. The embedding
$$
\Orb(b;l_0,l_1)\subset  {\mathcal K}-\Orb(b;l_0,l_1)$$
with constant $1$ follows immediately from the definitions. Therefore, it is left to prove the opposite embedding.

It is well known (see e.g. \cite[Lemma~7.1.3]{BL}) that for every pair of quasi-normed groups $(X_0,X_1)$ and arbitrary $x\in X_0+X_1$ we have
$$
{\mathcal E}^*(t,x;X_0,X_1)=\sup_{s>0}s^{-1}({\mathcal K}(s,x;X_0,X_1)-t),$$
where ${\mathcal E}^*(t,x;X_0,X_1)$ is the greatest convex minorant of  ${\mathcal E}(t,x;X_0,X_1)$, and also that for each $\gamma\in (0,1)$
$$
{\mathcal E}^*(t,x;X_0,X_1)\le {\mathcal E}(t,x;X_0,X_1)\le (1-\gamma)^{-1}{\mathcal E}^*(\gamma t,x;X_0,X_1),\;\;t>0.$$

Assuming now that $x\in {\mathcal K}-\Orb(b;l_0,l_1)$, with $C:=\|x\|_{{\mathcal K}-\Orb}$, and applying the above inequalities for $\gamma=1/2$, we get
\begin{eqnarray*}
{\mathcal E}(2t,x;l_0,l_1)&\le& 2{\mathcal E}^*(t,x;l_0,l_1)=2\sup_{s>0}s^{-1}({\mathcal K}(s,x;l_0,l_1)-t)\\ &\le& 2C\sup_{s>0}s^{-1}({\mathcal K}(s,b;l_0,l_1)-t/C)=2C{\mathcal E}^*(t/C,b;l_0,l_1)\\ &\le& 2C{\mathcal E}(t/C,b;l_0,l_1),\;\;t>0. 
\end{eqnarray*}
Hence,
$$
\sum_{i=k}^\infty x_i^*\le 2C\sum_{i=[k/(2C)]}^\infty b_i^*,\;\;k=1,2,\dots
$$
By Theorem~\ref{Th1}, this implies that $x\in\Orb(b;l_0,l_1)$ and $\|x\|_{\Orb}\le 9([2C]+1)$ if $C>1/2$ and $\|x\|_{\Orb}\le 3[(2C)^{-1}]^{-1}$ if $C\le 1/2$.
\end{proof}

\begin{proof}[Proof of Corollary~\ref{cor2}]
Let first $X$ be a r.i. sequence group such that $l_0\subset X\subset l_1$ and from the inequality 
\begin{equation}\label{EQ25d}
\mathcal K(t,x;l_0,l_1)\le C' \mathcal K(t,b;l_0,l_1),\;\;t>0,
\end{equation}
where $C'$ is a constant, and $b\in X$ it follows that $x\in X$ and $\|x\|_X\le C\|b\|_X$, where $C>0$ does not depend on $x$ and $b$.

If $T$ is a bounded operator in $l_0$ and $l_1$, then for $b\in X$ we have
$$
\mathcal K(t,Tb;l_0,l_1)\le \|T\|_{(l_0,l_1)}\mathcal K(t,b;l_0,l_1),\;\;t>0.$$
Hence, from the hypothesis it follows that $Tb\in X$ and $\|Tb\|_X\le C\|b\|_X$, with some constant $C$ independent of $b$. Thus, $T$ is bounded in $X$, and, as a result, $X$ is an interpolation group with respect to the pair $(l_0,l_1)$.

For the converse, suppose that $X$ is an interpolation group with respect to the pair $(l_0,l_1)$. Let $x\in l_1$ and $b\in X$ satisfy inequality \eqref{EQ25d}. This means that $x\in {\mathcal K}-\Orb(b;l_0,l_1)$. 
Consequently, by Corollary~\ref{cor1}, we have $x\in \Orb(b,l_0,l_1)$. Then, from the definition of $\Orb(b,l_0,l_1)$ it follows that $x=Tb$ for some homomorphism $T$ of the pair $(l_0,l_1)$.  By the interpolation hypothesis, we have $T:\,X\to X$, whence $x\in X$ and $\|x\|_X\le C\|b\|_X$, where $C>0$ does not depend on $x$ and $b$.
\end{proof}

\begin{remark}
\label{E-interpolation}
The following characterization of interpolation groups with respect to the pair $(l_0,l_1)$ is an immediate consequence of Corollary~\ref{cor0}:

Let $X$ be a r.i. sequence group such that $l_0\subset X\subset l_1$.  Then, $X$ is an interpolation group with respect to the pair $(l_0,l_1)$ if and only if from the inequality 
$$
\mathcal E(t,x;l_0,l_1)\le \mathcal E(t,b;l_0,l_1),\;\;t>0,$$
and $b\in X$ it follows that $x\in X$ and $\|x\|_X\le C\|b\|_X$, where $C>0$ depends only on $X$.

Recall that by the $\mathcal E$-functional can be constructed the $\mathcal E$-method of interpolation, which is close to the real interpolation method based on using the $\mathcal K$-functional (see \cite[Sec.~4.2]{BK}). From the above result, in particular, it follows that, for every parameter of the $\mathcal E$-method  $\Psi$, the space ${\mathcal E}_\Psi(l_0,l_1)$ (see  \cite[Definitions~4.2.18 and 4.2.19]{BK}) is an interpolation group with respect to the pair $(l_0,l_1)$ (see Section~\ref{examples} for concrete examples of such groups).
Unfortunately, the $\mathcal K$-divisibility, playing a key role in a description of interpolation spaces with respect to $\mathcal K$-monotone Banach pairs \cite[Sec.~3.2]{BK}, is not longer true in the case of pairs of quasi-normed Abelian groups \cite[Example~3.2.11]{BK}, and this does not allow to describe all interpolation groups with respect to a $\mathcal K$-monotone pair in the above way (as in the Banach case). However, the following weak version of the $\mathcal K$-divisibility for a pair of quasi-normed Abelian groups still holds \cite[Theorem~3.2.12]{BK}:

Let $(X_0,X_1)$ be a pair of quasi-normed Abelian groups, $x\in X_0+X_1$, and let $\varphi_k$, $k=1,2,\dots,N$, be non-negative continuous concave functions on $[0,\infty)$ such that
$$
\mathcal K(t,x;X_0,X_1)\le\sum_{k=1}^N\varphi_k(t),\;\;t>0.$$
Then, there are $x_k\in X_0+X_1$, $k=1,2,\dots,N$, such that
$$
x=\sum_{k=1}^N x_k\;\mbox{and}\;\;K(t,x_k;X_0,X_1)\le\gamma\varphi_k(t),\;\;t>0,\;k=1,2,\dots,N,
$$
where $\gamma$ depends only on $(X_0,X_1)$ and $N$.
\end{remark}

\section{\protect \medskip Groups of Marcinkkiewicz type interpolation with respect to $(l_0,l_1)$}
\label{examples}

Let $(\alpha_k)_{k=1}^\infty$ be an increasing sequence of positive numbers such that for some constants $R_1$ and $R_2$ we have
\begin{equation}\label{EQuat1}
\alpha_{2k}\le R_1\alpha_k,\;\;k=1,2,\dots,
\end{equation}
and
\begin{equation}\label{EQuat2}
\sum_{i=k}^\infty \alpha_{i}^{-1}\le R_2k\alpha_k^{-1},\;\;k=1,2,\dots
\end{equation}
Denote by $M_\alpha$ the set of all sequences $x=(x_k)_{k=1}^\infty$ such that
$$
\|x\|_{\alpha}:=\sup_{k=1,2,\dots}\alpha_kx_k^*<\infty.$$
To prove that this functional defines a r.i. quasi-norm on $M_\alpha$, it suffices to check that for every $x,y\in M_\alpha$ we have
\begin{equation}\label{EQuat3}
\|x+y\|_{\alpha}\le R_1^2(\|x\|_{\alpha}+\|y\|_{\alpha}).
\end{equation}
Indeed, by \cite[Proposition~2.1.7]{BS},
$$
(x+y)_i^*\le x_{[i/2]}^*+y_{[i/2]}^*,\;\;i=2,3,\dots$$
Hence, for all $i=2,3,\dots$ from \eqref{EQuat1} it follows that
$$
\alpha_i(x+y)_i^*\le \alpha_ix_{[i/2]}^*+\alpha_iy_{[i/2]}^*\le \sup_{i=2,3,\dots}\frac{\alpha_i}{\alpha_{[i/2]}}(\|x\|_{\alpha}+\|y\|_{\alpha})\le R_1^2(\|x\|_{\alpha}+\|y\|_{\alpha}).$$
Combining this together the obvious inequality $(x+y)_1^*\le x_1^*+y_1^*$, we see that \eqref{EQuat3} is established. Thus, $M_\alpha$ is a r.i. sequence group.

Let us prove that $M_\alpha$ is an interpolation group with respect to the pair $(l_0,l_1)$. To this end, according to Remark~\ref{E-interpolation}, it suffices to show that 
\begin{equation}\label{EQuat4}
R_1^{-2}R_2^{-1}\|x\|_{\alpha}\le \sup_{k=1,2,\dots}\beta_k^{-1}\sum_{i=k}^\infty x_i^*\le\|x\|_{\alpha},
\end{equation}
where $\beta_k:=\sum_{i=k}^\infty \alpha_i^{-1}$, $k=1,2,\dots$.

First, from the inequality $x_i^*\le \|x\|_{\alpha}\alpha_i^{-1}$, $i=1,2,\dots$, it follows
$$
\sum_{i=k}^\infty x_i^*\le\|x\|_{\alpha}\sum_{i=k}^\infty\alpha_i^{-1}\le \beta_k\|x\|_{\alpha},$$
which implies the right-hand side of inequality \eqref{EQuat4}.

Let now $C:=\sup_{k=1,2,\dots}\beta_k^{-1}\sum_{i=k}^\infty x_i^*<\infty$. Since the sequence $(\alpha_k)_{k=1}^\infty$ increases, by \eqref{EQuat2}, we have for $k=1,2,\dots$
$$
kx_{2k}^*\le \sum_{i=k}^\infty x_i^*\le C\beta_k\le CR_2k\alpha_k^{-1},$$
whence $\alpha_kx_{2k}^*\le CR_2$, $k=1,2,\dots$. Therefore, applying \eqref{EQuat2} once more, for $i=2,3,\dots$ we get
$$
\alpha_ix_{i}^*\le \sup_{k=2,3,\dots}\frac{\alpha_k}{\alpha_{[k/2]}}\alpha_{[i/2]}x_{2[i/2]}^*\le CR_1^2R_2.$$
Combining this with the obvious inequality $\alpha_1x_1^*\le CR_2$, we conclude that $\|x\|_{\alpha}\le CR_1^2R_2$, and thus the left-hand side of \eqref{EQuat4} is proved. 

Let $0<p<1$. Then, $\alpha_k=k^{1/p}$, $k=1,2,\dots$, clearly, satisfy conditions \eqref{EQuat1} and \eqref{EQuat2}. Therefore, the set $M_p$ of all sequences $x=(x_k)_{k=1}^\infty$ such that
$$
\|x\|_{p}:=\sup_{k=1,2,\dots}k^{1/p}x_k^*<\infty$$
is an interpolation r.i. group with respect to the pair $(l_0,l_1)$.

\end{document}